\documentclass[11pt,reqno,a4paper]{amsart}
\usepackage[latin2]{inputenc}
\usepackage{t1enc}
\usepackage{verbatim}
\usepackage{amsmath, amsthm, amssymb}
\usepackage{graphicx}
\usepackage{xcolor}
\usepackage{soul}

\theoremstyle{plain}
\newtheorem{theo}{Theorem}[section] 
\newtheorem{theorem}[theo]{Theorem}
\newtheorem{corollary}[theo]{Corollary}
\newtheorem{claim}{\sc Claim}

\newcommand{\rel}[1]{\mathbb{#1}}
\newcommand{\alg}[1]{\mathbf{#1}}
\newcommand{\var}[1]{\mathcal{#1}}

\newcommand{\A}{{\bf A}}

 \usepackage{mathptmx}  

\begin{document}

\title{Congruence permutability is prime}

\author{Gerg\H o Gyenizse}
\email{gergogyenizse@gmail.com}
\address {Bolyai Institute, University of Szeged, Hungary}
          
\author{Mikl\'os Mar\'oti}
\email{mmaroti@math.u-szeged.hu}
\address {Bolyai Institute, University of Szeged, Hungary}
           
\author{L\'aszl\'o Z\'adori}

\email{zadori@math.u-szeged.hu}
\address{Bolyai Institute, University of Szeged, R\'enyi Institute, Budapest, Hungary}

\thanks{The research of authors was partially supported by the grant NKFIH-K128042 of the Ministry for Innovation and Technology, Hungary, and by the Alfr\'ed R\'enyi Institute of Mathematics, ELKH, Hungary.}

\begin{abstract}
We give a combinatorial proof that congruence permutability is prime in the lattice of interpretability types of varieties. Thereby, we settle a 1984 conjecture of Garcia and Taylor.
\end{abstract}
\maketitle
\section{Introduction}

A {\em variety} or {\em equational class} is a class of all algebras (algebraic structures of a given signature) satisfying a given set of identities.
Let $\Gamma$ be a set of identities over a certain signature of a variety, and let $\mathcal{K}$ be a variety of some (possibly different) signature. We say that $\Gamma$  {\em interprets in the variety} $\mathcal{K}$ if by replacing the operation symbols in  $\Gamma$ by term expressions of $\mathcal{K}$---same symbols by same terms with arities kept---the so obtained set of identities holds in $\mathcal{K}$. A   {\em variety $\mathcal{K}_1$ interprets in a variety $\mathcal{K}_2$} if there is a set of identities $\Gamma$ that defines $\mathcal{K}_1$ and interprets in $\mathcal{K}_2$.

As easily seen, interpretability is a quasiorder on the class of varieties. The blocks of this quasiorder  are called the {\em interpretability types}. In \cite{GT} Garcia and Taylor introduced the {\em lattice of interpretability types of varieties} that is obtained by taking the quotient of the class of varieties quasiordered by  interpretability   and  the corresponding equivalence relation. The join in this lattice is described as follows. Let $\mathcal{K}_1$  and $\mathcal{K}_2$ be two varieties of disjoint signatures, defined by the sets $\Sigma_1$ and $\Sigma_2$ of identities, respectively.  Their {\em join} $\mathcal{K}_1\vee\mathcal{K}_2$ is the variety defined by $\Sigma_1\cup\Sigma_2$. The so defined join is compatible with the interpretability relation of varieties, and naturally yields the definition of the join operation in the lattice of interpretability types of varieties.

A {\em digraph} is a pair  $\rel G=(G; E(\rel G))$ where $G$ is a set and $E(\rel G)$ is a binary relation on $G$. Here $G$ is called the {\em vertex set} of $\rel G$ and $E(\rel G)$ the {\em edge relation} of $\rel G$. A {\em reflexive (symmetric, transitive)} digraph is a digraph whose edge relation is reflexive (symmetric, transitive).

A {\em compatible digraph of an algebra} $\alg A$ is a digraph whose vertex set coincides with the base set of $\alg A$ and whose edge relation is preserved by all of the basic operations of $\alg A$. A {\em compatible digraph in a variety} is  a compatible digraph of an algebra in the variety.

A {\em congruence} of an algebra $\A$ is an equivalence $\rho$ on $A$ such that $(A;\rho) $ is a compatible digraph of $\A$. An {\em algebra $\A$ is congruence permutable}, if  for any two congruences $\alpha$ and $\beta$ of $\A$, $\alpha\beta=\beta\alpha$.
A {\em variety is congruence permutable} if all of its members are congruence permutable. For example, the varieties of groups, rings and vector spaces are congruence permutable. The following characterization of congruence permutable varieties is due to Maltsev, cf. \cite{M}.

\begin{theorem}[Maltsev (1954)]\label{Maltsev}
Let $\mathcal{K}$ be a variety. Let $\alg F_2$ be the algebra freely generated by $x$ and $y$ in $\var {K}$. Then the following are equivalent.
\begin{enumerate}
\item $\var {K}$ is congruence permutable.
\item The reflexive digraph whose vertex set is $F_2$  and whose edge relation is the subalgebra generated by $\{(x,x),\ (x,y),\ (y,y) \}$ in $\alg F^2_2$ is symmetric.
\item The set $\{m(x,y,y)=x,\ m(y,y,x)=x\}$ of identities where $m$ is a ternary function symbol interprets in $\mathcal{K}$.
\end{enumerate}
\end {theorem}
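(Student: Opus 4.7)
My plan is to establish the three-way equivalence by proving the chain $(3) \Rightarrow (1) \Rightarrow (3)$ together with $(3) \Leftrightarrow (2)$. The arguments naturally split into three flavors: manipulation of congruences using a Maltsev term, extraction of such a term from permutability via the free algebra on three generators, and a direct dictionary between membership of $(y,x)$ in the digraph on $F_2$ and the Maltsev identities holding in $\alg F_2$.

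For $(3) \Rightarrow (1)$, given a Maltsev term $m$ and congruences $\alpha, \beta$ of some $\A \in \var K$ with $(a,b) \in \alpha$ and $(b,c) \in \beta$, I would set $d := m(a,b,c)$ and use that $m$ preserves each of $\alpha$, $\beta$: the identity $a = m(a,b,b)$ forces $(a, d) \in \beta$, while $m(b,b,c) = c$ forces $(d, c) \in \alpha$. This gives $\alpha\beta \subseteq \beta\alpha$, with the reverse inclusion by symmetry.

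For $(1) \Rightarrow (3)$, I would work in the algebra $\alg F_3 \in \var K$ freely generated by $\{x, y, z\}$, taking $\alpha$ and $\beta$ to be the congruences generated by $(x, y)$ and $(y, z)$ respectively. Then $(x, z) \in \alpha\beta$ via $y$, so (1) yields some $d \in F_3$ with $(x, d) \in \beta$ and $(d, z) \in \alpha$; write $d = m(x, y, z)$ for a ternary term $m$. Quotienting by $\beta$ identifies $y$ with $z$ and produces a copy of $\alg F_2$, turning $(x, d) \in \beta$ into the equation $x = m(x, y, y)$, which by freeness of $\alg F_2$ is an identity of $\var K$. The companion identity $m(y, y, x) = x$ arises from the analogous quotient by $\alpha$ after renaming variables.

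For $(3) \Leftrightarrow (2)$, I would work directly with the reflexive compatible digraph $R$ generated by $G := \{(x, x), (x, y), (y, y)\}$ inside $\alg F_2^2$. The key point is that $(y, x) \in R$ is equivalent, via the universal property of $\alg F_2$, to the existence of a ternary term $m$ with $m((x, x), (x, y), (y, y)) = (y, x)$, and the latter unpacks coordinatewise into $m(x, x, y) = y$ and $m(x, y, y) = x$---precisely the Maltsev identities after a variable renaming. Hence (2) yields $(y, x) \in R$ and thereby (3). Conversely, granted a Maltsev term, symmetry of $R$ follows by applying the coordinate swap $\sigma$ to any term representation $(a, b) = t(g_1, \dots, g_n)$ with $g_i \in G$: since $\sigma$ fixes $(x, x)$ and $(y, y)$ and sends $(x, y)$ into the now-available element $(y, x) \in R$, we conclude $(b, a) = t(\sigma(g_1), \dots, \sigma(g_n)) \in R$. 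The only step requiring real care is the quotient in $(1) \Rightarrow (3)$, where one must identify $\alg F_3 / \beta$ with $\alg F_2$ and exploit the fact that equations valid in a free algebra of a variety are identities of the variety; the remaining manipulations are routine bookkeeping.
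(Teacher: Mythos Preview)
The paper does not actually prove this statement: Theorem~\ref{Maltsev} is quoted with attribution to Maltsev and a reference to \cite{M}, and the paper proceeds immediately to use it without supplying an argument. So there is no ``paper's own proof'' to compare against.

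That said, your proposal is a correct and standard proof of Maltsev's theorem. A couple of minor remarks. In $(1)\Rightarrow(3)$ you do not really need the isomorphism $\alg F_3/\beta \cong \alg F_2$; it suffices that the congruence $\beta$ generated by $(y,z)$ is contained in the kernel of the homomorphism $\phi\colon \alg F_3\to \alg F_2$ sending $x\mapsto x$, $y,z\mapsto y$, so that $(x,d)\in\beta$ forces $\phi(x)=\phi(d)$, i.e.\ $x=m(x,y,y)$ in $\alg F_2$. In $(3)\Leftrightarrow(2)$, note that the identities you extract from $(y,x)\in R$, namely $m(x,x,y)=y$ and $m(x,y,y)=x$, are the Maltsev identities only after the variable swap $x\leftrightarrow y$ in the first one; you mention this but it is worth making explicit. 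Finally, your use of the coordinate swap $\sigma$ to pass from $(y,x)\in R$ to full symmetry of $R$ is clean and correct, since $\sigma$ is an automorphism of $\alg F_2^2$ carrying the generating set $G\cup\{(y,x)\}$ into $R$.
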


A finite set of identities is called a {\em strong  Maltsev condition}. The set of identities occurring in the third condition of the preceding theorem is a typical example of a strong Maltsev condition. The interpretability types that contain the varieties in which a strong Maltsev condition interprets constitute a principal filter in the lattice of interpretability types of varieties. So the interpretability types containing the congruence permutable varieties form a principal filter in the lattice of interpretability types by Maltsev's theorem. We say that a strong Maltsev condition is {\em prime} if the related principal filter is a prime filter (equivalently, the smallest element of this principal filter is join-prime). In \cite{GT}, Garcia and Taylor formulated the conjecture that congruence permutability is prime. Formally, the strong Maltsev condition in item (3) of the above theorem  is prime.

In 1996, Tschantz announced a proof of this conjecture. However, his proof has remained unpublished, cf. \cite{tsc}. In the course of time, two partial results related to Garcia and Taylor's conjecture were published. In his PhD thesis \cite{S}, Sequeira proved that congruence permutability is prime with respect to varieties axiomatized by  identities  with  terms  of  depth  at  most  two. Kearnes and Tschantz gave a proof that congruence permutability is prime in the lattice of interpretability types of idempotent varieties, cf. Lemma 2.8  in \cite{KT}.   In the present paper, we give a proof of the primeness of congruence permutability in its full generality.

\section{A proof of the primeness of congruence permutability}

 A digraph is called a {\em complete digraph} if its edge relation is the full binary relation. In particular, complete digraphs are reflexive digraphs. The {\em complement of a digraph} $\rel G$ is the digraph whose vertex set is $G$ and whose edge set is $G^2\setminus E(\rel G)$. Let $\rel G$ be a digraph and $u$ a vertex of $\rel G$. Let $\rel G-u $ denote the digraph obtained by removing  the vertex $u$ and the edges incident with $u$ from $\rel G$. We call $u$ a {\em universal vertex} if for all $g\in G$, $u\leftrightarrow g$ in $\rel G$.

 Let $\rel H_i$, $i\in I$, be digraphs. Their product $\prod_{i\in I} \rel H_i$ is the digraph whose vertex set is $\prod_{i\in I} H_i$ and whose edge relation is 
 \[
 \{\ (h,h')\in (\textstyle\prod_{i\in I} H_i)^2:\ h(i)\to h'(i) \text{ in } \rel H_i \text{ for all } i\in I\ \}.
 \]
Let $\rel G$ and $\rel H$ be two digraphs. We define the digraph $\rel G^{\rel H}$ as follows. The vertex set of $\rel G^{\rel H}$ is the set of maps from the set $H$ to the set $G$. The edge relation of $\rel G^{\rel H}$ is defined by
$$ f\rightarrow f' \text{ if and only if } f(x)\rightarrow f'(y) \text{ in } \rel G \text{ for every }x\rightarrow y \text{ in } \rel H.$$ Observe that for any digraphs $\rel G,\ \rel H_1$ and $\rel H_2$, $\rel G^{\rel H_1\times \rel H_2}\cong \left(\rel G^{\rel H_1}\right)^{\rel H_2}$.

The heart of the proof of the main result of this paper is the following theorem on digraph powers.
\begin{theorem}
\label{two_isomorphic_powers}
For $1\leq i\leq 2$, let $\rel G_i$ be a digraph with a universal vertex $u_i$ and $\rel G^*_i$ the complement of the digraph $\rel G_i-u_i$. Let $\kappa$ be an infinite cardinal where $\kappa \geq \max(|G_1|,|G_2|)$, and  $\rel K$ a complete digraph of $\kappa$-many vertices. Then \[
\rel G_1^{\rel G^*_2\times\rel K}\cong\rel G_2^{\rel G^*_1\times\rel K}.
\]
\end{theorem}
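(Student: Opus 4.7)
\textbf{The plan} is to identify a common ``signature'' on the vertices of both digraphs, show the edge relation is determined by the signature, and match multiplicities. For a vertex $f\colon G_2^*\times K \to G_1$ of $\rel G_1^{\rel G_2^*\times \rel K}$ I would define
\[
T(f) = \{(v,a) \in G_1^*\times G_2^* : v \in f(\{a\}\times K)\},
\]
and symmetrically $S(g) = \{(b,w) \in G_1^*\times G_2^* : w \in g(\{b\}\times K)\}$ for vertices $g$ of $\rel G_2^{\rel G_1^*\times \rel K}$. Both signatures lie in the same set $\mathcal{P}(G_1^*\times G_2^*)$.

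The first key step is to show that edges depend only on signatures. Since $\rel K$ is complete, $f\to f'$ iff $f(\{a\}\times K)\times f'(\{a'\}\times K)\subseteq E(\rel G_1)$ for every $a\to a'$ in $\rel G_2^*$. The universality of $u_1$ forces any non-edge $v\not\to w$ of $\rel G_1$ to have $v,w\in G_1^*$ with $v\to w$ in $\rel G_1^*$, hence
\[
f\not\to f' \iff \exists\,(v,a)\in T(f),\ (v',a')\in T(f'):\ v\to v' \text{ in }\rel G_1^*\text{ and }a\to a' \text{ in }\rel G_2^*.
\]
An identical condition governs non-edges of the other digraph with $S$ in place of $T$, so the two digraphs project onto a common quotient digraph on $\mathcal{P}(G_1^*\times G_2^*)$.

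The second key step is the fibre count. Since $\kappa$ is infinite and $\kappa\geq\max(|G_1|,|G_2|)$, every subset $R$ of $G_i$ of cardinality at least $2$ is the image of $2^\kappa$ surjections from $K$ (split $K$ into two halves of size $\kappa$ and glue a fixed surjection on the second half to an arbitrary function on the first). Separating, for each $a\in G_2^*$, the two cases of whether $u_1$ lies in $f(\{a\}\times K)$ and multiplying the local counts, one sees that the fibre of any non-empty signature has size $2^\kappa$, while the empty signature has a unique preimage (the constant map onto $u_1$, respectively $u_2$).

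The isomorphism is then assembled by pairing the two constant vertices and choosing, for each non-empty $T\in\mathcal{P}(G_1^*\times G_2^*)$, any bijection between the two $2^\kappa$-sized fibres; edges are preserved by the first step. \textbf{The main obstacle} is the fibre count: one must obtain the uniform value $2^\kappa$ despite $T$ varying over subsets of $G_1^*\times G_2^*$ of widely different sizes and internal structure, and in particular handle carefully the delicate dependence on whether the universal vertex $u_i$ appears in each slice's image.
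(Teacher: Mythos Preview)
Your proposal is correct and follows essentially the same route as the paper's proof: your signature $T(f)$ is exactly the paper's invariant $C(f)$, your non-edge characterization (phrased via edges of $\rel G_i^*$) is equivalent to the paper's Claim~1, and your fibre count matches the paper's Claim~2, after which both arguments assemble the isomorphism by choosing bijections between equal-signature fibres. The only cosmetic difference is that the paper exhibits $2^\kappa$ elements of each nonempty fibre directly via maps $f_S$ indexed by $\kappa$-sized subsets $S\subseteq K$, whereas you count slice by slice; both give the same $2^\kappa$.
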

\begin{proof} 
When $\rel G_1$ or $\rel G_2$ is a one-element complete graph, then both of the digraphs $\rel G_1^{\rel G^*_2\times\rel K}$ and $\rel G_2^{\rel G^*_1\times\rel K}$ are one-element complete graphs. So in what follows in this proof, we assume that $|G_1|,|G_2|\geq 2$. Let $G^*_i:=G_i\setminus\{u_i\}$ for $1\leq i\leq 2$.

For each vertex $f$ of $\rel G_1^{\rel G^*_2\times\rel K}$, let \[
C(f):=\{\ (g_1,g_2)\in G^*_1\times G^*_2:\ \text{ there exists } k\in K \text{ such that } f(g_2,k)=g_1\ \}.
\]

First, we give a characterization of the edge relation of $\rel G_1^{\rel G^*_2\times\rel K}$ by the use of the  sets $C(f)$.

\begin{claim}
Let $f,f'\in\rel G_1^{\rel G^*_2\times\rel K}$. The pair $(f,f')$ is not an edge of the digraph $\rel G_1^{\rel G^*_2\times\rel K}$ if and only if there exist $(g_1,g_2)\in C(f)$ and $(g'_1,g'_2)\in C(f')$ such that none of $(g_1,g'_1)$ and $(g_2,g'_2)$ are edges in $\rel G_1$ and $\rel G_2$, respectively. 
\end{claim}

 Clearly, $(f,f')$ is not an edge in $\rel G_1^{\rel G^*_2\times\rel K}$ if and only if there exist $g_1,\ g'_1\in G^*_1$, $g_2,\ g'_2\in G^*_2$ and $k,\ k'\in K$ such that $$f(g_2,k)=g_1,\  f(g'_2,k')=g'_1,$$ $(g_2,k)\to (g'_2,k')$ in $\rel G^*_2\times\rel K$ and $g_1\not\to g'_1$ in $\rel G_1$. By using the definitions of $C(f)$ and $C(f')$ and taking into account that $(g_2,k)\to (g'_2,k')$ in $\rel G^*_2\times\rel K$ is equivalent to $g_2\not\to g'_2$ in $\rel G_2$, we get the claim.

 We define the  equivalence $\varrho_1$ on $\rel G_1^{\rel G^*_2\times\rel K}$ by 
 $$(f, f')\in \varrho_1 \text{ if and only if }C(f)=C(f').$$
Clearly, the $\varrho_1$-block  that contains the constant $u_1$ map is a singleton. By the following claim, each of the other blocks has cardinality $2^\kappa$. Also, the subsets of $G^*_1\times G^*_2$ are in a bijective correspondence with the blocks of $\varrho_1$. 

\begin{claim}
For every non-empty subset $U$ of $G^*_1\times G^*_2$
$$B_U=\{\ f\in G_1^{ G^*_2\times K}:\ C(f)=U\ \}$$ is a $\rho_1$-block of cardinality $2^{\kappa}$.
\end{claim}
For any subset $S$ of $K$ with $|S|=|K|=\kappa$, let  $g_S$ be any surjective map in $U^S$. There is such a map, since $|U|\leq |G_1||G_2|\leq \kappa$. Now for every $S\subseteq K$ with $|S|=\kappa$, we define $f_S\in \rel G_1^{\rel G^*_2\times\rel K}$ by
\[
f_S(g_2,k):=\begin{cases}
g_1,\text{ if  $k\in S$ and $(g_1,g_2)=g_S(k)$,}\\
u_1\text{ otherwise.}
\end{cases}
\]
Now clearly, $f_S\in B_U$ for all $S\subseteq K$ with  $|S|=\kappa$.
Moreover, the number of the $f_S$ coincides with that of the subsets of cardinality $\kappa$ in $K$. So this number is $2^{\kappa}$, hence $2^{\kappa}\leq|B_U|$. On the other hand, $|B_U|\leq |G_1|^{|G_2||K|}=2^\kappa$. Since $B_U$ is non-empty, it is clearly a $\rho_1$-block. So the claim is proved.

We extend  the definition of $C$ onto $\rel G_2^{\rel G^*_1\times\rel K}$ by letting \[
C(h):=\{\ (g_1,g_2)\in G^*_1\times G^*_2 :\ \text{ there exists } k\in K\text{ such that } h(g_1,k)=g_2\ \}
\] for any $h\in G_2^{G^*_1\times K}$. We define the equivalence $\varrho_2$ on $\rel G_2^{\rel G^*_1\times\rel K}$ analogously to $\varrho_1$. With the notions defined in this paragraph, the analogues of Claim 1 and Claim 2 obviously hold.

By Claim 2 and its analogue, for every non-empty subset $U\subseteq G^*_1\times G^*_2$, the cardinalities of the $\rho_1$-block $B_U$ and the $\rho_2$-block $D_U$ that correspond to $U$ coincide with $2^{\kappa}$. Hence there exists a bijection from $B_U$ to $D_U$. If $U=\emptyset$, there is also a bijection from  the related $\rho_1$-block to the related $\rho_2$-block, since these blocks are one-element, containing the constant $u_1$ and $u_2$ maps, respectively.  We take such a bijection for every  $U\subseteq G^*_1\times G^*_2$. Let $\eta$ be the union of these bijections. So $\eta$ is a bijection from $\rel G_1^{\rel G^*_2\times\rel K}$ to $\rel G_2^{\rel G^*_1\times\rel K}$. We finish off the proof by verifying that $\eta$ is an isomorphism.

Let $f,f'\in\rel G_1^{\rel G^*_2\times\rel K}$. By Claim 1,  the pair $(f,f')$ is not an edge of the digraph $\rel G_1^{\rel G^*_2\times\rel K}$ if and only if there exist 
$$(g_1,g_2)\in C(f)\text{ and } (g'_1,g'_2)\in C(f')$$ 
such that none of $(g_1,g'_1)$ and $(g_2,g'_2)$ are edges in $\rel G_1$ and $\rel G_2$, respectively. Since 
$$C(f)=C(\eta(f))\text{ and } C(f')=C(\eta(f')),$$  the latter one is equivalent to the condition that there exist 
$$(g_1,g_2)\in C(\eta(f))\text{ and } (g'_1,g'_2)\in C(\eta(f'))$$ 
such that none of $(g_1,g'_1)$ and $(g_2,g'_2)$ are edges in $\rel G_1$ and $\rel G_2$, respectively. Now by the analogue of Claim 1, this is equivalent that  the pair $(\eta(f),\eta(f'))$ is not an edge of the digraph $\rel G_2^{\rel G^*_1\times\rel K}$. Thus $(f,f')$ is not an edge of the digraph $\rel G_1^{\rel G^*_2\times\rel K}$ if and only if $(\eta(f),\eta(f'))$ is not an edge of the digraph $\rel G_2^{\rel G^*_1\times\rel K}$. So $\eta$ is an isomorphism.
\end{proof}

The following corollary of the preceding theorem is an essential tool in the proof of our main result.

\begin{corollary}
\label{many_isomorphic_powers}
Let $I$ be an arbitrary set. For every $i\in I$, let $\rel G_i$ be a non-complete digraph with a universal vertex $u_i$. Then there exist a digraph $\rel X$ and a non-complete digraph $\rel T$ with a universal vertex such that $\rel G_i^{\rel X}\cong \rel T$  for all $i\in I$.
\end{corollary}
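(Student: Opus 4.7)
The plan is to build a single exponent $\rel X$ so ``large'' that $\rel G_j^{\rel X}$ does not depend, up to isomorphism, on the choice of $j\in I$. The mechanism is Theorem~\ref{two_isomorphic_powers}, which lets us swap $\rel G_i$ and $\rel G_j$ at the cost of inserting an extra factor of the form $\rel G^*_\bullet\times\rel K$ into the exponent; the trick is to choose $\rel X$ so that such insertions are absorbed.

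Concretely, I would fix an infinite cardinal $\kappa$ with $\kappa\geq|G_i|$ for every $i\in I$, let $\rel K$ be a complete digraph on $\kappa$ vertices, and set
\[
\rel X:=\prod_{(i,n)\in I\times\omega}(\rel G^*_i\times\rel K).
\]
A routine ``Hilbert hotel'' reindexing of $I\times\omega$ gives $\rel X\cong(\rel G^*_j\times\rel K)\times\rel X$ for every $j\in I$. Then, for arbitrary $i,j\in I$, one computes
\[
\rel G_j^{\rel X}\cong\rel G_j^{(\rel G^*_i\times\rel K)\times\rel X}\cong\bigl(\rel G_j^{\rel G^*_i\times\rel K}\bigr)^{\rel X}\cong\bigl(\rel G_i^{\rel G^*_j\times\rel K}\bigr)^{\rel X}\cong\rel G_i^{(\rel G^*_j\times\rel K)\times\rel X}\cong\rel G_i^{\rel X},
\]
using absorption, the exponential law $\rel G^{\rel H_1\times\rel H_2}\cong(\rel G^{\rel H_1})^{\rel H_2}$, Theorem~\ref{two_isomorphic_powers}, the exponential law once more, and absorption. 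So $\rel T:=\rel G_{j_0}^{\rel X}$ for any fixed $j_0\in I$ is isomorphic to $\rel G_i^{\rel X}$ for every $i\in I$.

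It then remains to verify that $\rel T$ has a universal vertex and is not complete. The constant map $\overline{u_{j_0}}$ is universal in $\rel G_{j_0}^{\rel X}$ because $u_{j_0}$ is universal in $\rel G_{j_0}$, directly from the definition of the edge relation on a digraph power. For non-completeness, each factor of $\rel X$ carries at least one edge: $\rel K$ is complete and non-empty, and $\rel G^*_i$ has an edge, since non-completeness of $\rel G_i$ together with universality of $u_i$ forces a non-edge of $\rel G_i$ to lie inside $G_i\setminus\{u_i\}$. Hence $\rel X$ itself has an edge, and a non-edge $(a,b)$ of $\rel G_{j_0}$ produces a non-edge between the constant maps $\overline{a},\overline{b}$ in $\rel G_{j_0}^{\rel X}$. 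The only creative step in the argument is the design of $\rel X$; once it is large enough to absorb all factors of the form $\rel G^*_i\times\rel K$, the rest reduces to two applications of Theorem~\ref{two_isomorphic_powers} sandwiched between routine uses of the binary exponential law, so I do not foresee any serious obstacle.
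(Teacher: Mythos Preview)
Your proof is correct and follows essentially the same route as the paper. The paper packages the absorption step slightly differently, writing $\rel X=\prod_{i\in I}(\rel G^*_i\times\rel K)^{\rel N}$ with $\rel N$ the countably infinite equality digraph and invoking $\rel G\times\rel G^{\rel N}\cong\rel G^{\rel N}$, but since $\rel G^{\rel N}$ is just the countable direct power $\prod_{n\in\omega}\rel G$, this is exactly your $\prod_{(i,n)\in I\times\omega}(\rel G^*_i\times\rel K)$ together with the same Hilbert-hotel shift; the remaining chain of isomorphisms and the verification that $\rel T$ is non-complete with a universal vertex are identical.
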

\begin{proof}
Let $\rel N$ be the countably infinite digraph whose edge relation is the equality. Let $\rel X:=\prod_{i\in I}(\rel G^*_i\times\rel K)^{\rel N}$ where for each $i\in I$, $\rel G^*_i$ is the complement of the digraph $\rel G_i-u_i$, and $\rel K$ is an infinite complete digraph whose cardinality is greater than or equal to that of any of the $\rel G_i$. Since for any digraph $\rel G$  $$\rel G\times\rel G^{\rel N}\cong \rel G^{\rel N},$$ for any $i\in I$ $$\rel X\cong(\rel{G}^*_i\times \rel K)\times \rel X.$$ Then by Theorem \ref{two_isomorphic_powers}, for any $i$ and $j$
$$\rel G_i^{\rel X}\cong\rel G_i^{(\rel G^*_j\times \rel K)\times\rel X}\cong\left( \rel G_i^{\rel G^*_j\times \rel K} \right)^\rel X\cong\left(\rel G_j^{\rel G^*_i\times\rel K}\right)^\rel X\cong\rel G_j^{(\rel G^*_i\times \rel K)\times \rel X}\cong\rel G_j^{\rel X}.$$

 Let $i$ be some element of $I$. Since for each $j\in I$, $\rel G_j$ is a non-complete digraph, $\rel G^*_j$ has an edge. Hence $\rel X$ has an edge, too. Therefore, the constant maps from $X$ to $G_i$ induce a subdigraph of $\rel G_i^{\rel X}$ isomorphic to $\rel G_i$.  Hence $\rel G_i^{\rel X}$ is a non-complete digraph. Moreover, the constant $u_i$ map  is a universal vertex in $\rel G_i^{\rel X}$. Then we let $\rel T:=\rel G_i^{\rel X}$ . Clearly, the so defined $\rel X$ and $\rel T$ satisfy the claim.
\end{proof}

Now we have all the tools at our disposal to prove our main result.

\begin{theorem} Congruence permutability is prime.
\end{theorem}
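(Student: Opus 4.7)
The plan is to prove the contrapositive: assume that neither $\mathcal{K}_1$ nor $\mathcal{K}_2$ is congruence permutable, and deduce that $\mathcal{K}_1\vee\mathcal{K}_2$ is also not congruence permutable. The first and technically decisive step is to produce, for each $i\in\{1,2\}$, a non-complete compatible digraph $\rel G_i$ in $\mathcal{K}_i$ with a universal vertex $u_i$. A natural candidate is to work in $F_3(\mathcal{K}_i)$ (free on three generators $x,y,u_i$) and take $\rel G_i$ to be the compatible digraph whose edge relation is the subuniverse of $F_3(\mathcal{K}_i)^2$ generated by $\{(u_i,c),(c,u_i):c\in F_3(\mathcal{K}_i)\}$. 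Universality of $u_i$ is then built in, and the main task is to show non-completeness: the presence of, for instance, $(y,x)$ in this subuniverse would translate into a system of identities in $\mathcal{K}_i$ that could be massaged into a Maltsev term, contradicting the hypothesis on $\mathcal{K}_i$.

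Given the digraphs $\rel G_i$, Corollary \ref{many_isomorphic_powers} applied with $I=\{1,2\}$ yields a digraph $\rel X$ and a non-complete digraph $\rel T$ with a universal vertex $u$ such that $\rel T\cong \rel G_i^{\rel X}$ for $i=1,2$. If $\A_i\in\mathcal{K}_i$ has $\rel G_i$ as a compatible digraph, then $\A_i^X\in\mathcal{K}_i$ has $\rel G_i^{\rel X}$ as a compatible digraph; transporting the algebra structure along the isomorphism $\rel T\cong \rel G_i^{\rel X}$, we obtain $\B_i\in\mathcal{K}_i$ on the vertex set $T$ whose compatible digraph is $\rel T$. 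Since $\mathcal{K}_1$ and $\mathcal{K}_2$ have disjoint signatures, $\B_1$ and $\B_2$ combine into a single algebra $\C$ on $T$ in $\mathcal{K}_1\vee\mathcal{K}_2$ of which $\rel T$ is a compatible digraph.

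Finally, suppose for contradiction that $\mathcal{K}_1\vee\mathcal{K}_2$ is congruence permutable, and let $m$ be a Maltsev term in the combined signature. Choose $a,b\in T$ with $(a,b)\notin E(\rel T)$ using non-completeness; necessarily $a,b\neq u$ since $u$ is universal. Then $(a,u),(u,u),(u,b)$ all lie in $E(\rel T)$ by universality of $u$, and applying $m$ componentwise together with the identities $m(x,y,y)=x$ and $m(y,y,x)=x$ gives
\[
m((a,u),(u,u),(u,b))=(m(a,u,u),m(u,u,b))=(a,b),
\]
which would have to lie in $E(\rel T)$ by compatibility, a contradiction. The main obstacle is the combinatorial verification of non-completeness in the first step, linking the failure of $\mathcal{K}_i$'s congruence permutability to the non-derivability of $(y,x)$ from the universal-vertex generators inside $F_3(\mathcal{K}_i)^2$.
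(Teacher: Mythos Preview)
Your plan from the application of Corollary~\ref{many_isomorphic_powers} onward is correct and in fact cleaner than the paper's route: once you have, for each $i$, a single non-complete compatible digraph $\rel G_i$ with a universal vertex, transporting the $\mathcal{K}_i$-algebra structure along the isomorphism $\rel G_i^{\rel X}\cong\rel T$ and then superimposing the two structures on $T$ is exactly the right move, and your final contradiction via $m((a,u),(u,u),(u,b))=(a,b)$ matches the paper's endgame. The difficulty is entirely in the step you yourself flag as the obstacle.

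The specific construction you propose for step~1 is not verified, and the verification is not routine. Suppose $(y,x)$ lies in the subuniverse $E\leq F_3^2$ generated by all $(u,c)$ and $(c,u)$. After reordering, this yields a term $t$ and elements $c_1,\dots,c_k,d_1,\dots,d_m\in F_3$ with $t(u,\dots,u,d_1,\dots,d_m)=y$ and $t(c_1,\dots,c_k,u,\dots,u)=x$ in $F_3$. Each of these is an \emph{unconditional} identity in $\mathcal K_i$ (valid for all values of $x,y,u$), so under any substitution each already collapses to a projection; there is no evident way to fuse the two into a single ternary term satisfying both Maltsev boundary conditions. Nor does a reduction to Theorem~\ref{Maltsev}(2) present itself: any homomorphism $F_3\to F_2$ must send $u$ somewhere, and then the image of your generating set already contains the pair you are trying to exclude. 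So the ``massaging into a Maltsev term'' is, if possible at all, a theorem of comparable depth rather than a manipulation.

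The paper avoids this issue by a different route. It starts from the reflexive non-symmetric compatible digraph $\rel G_0$ supplied by Theorem~\ref{Maltsev}(2) and manufactures universal vertices via the pp-definable passage to $\rel G_1$ on $4$-tuples: each component of $\rel G_1$ acquires a universal vertex, and a non-symmetric edge of $\rel G_0$ forces some component to be non-complete. But a single component is generally not a subalgebra, so one cannot simply isolate it and run your short argument. This is why the paper continues with the further pp-definable constructions $\rel G_2$, $\rel G_3$ (applying Corollary~\ref{many_isomorphic_powers} to \emph{all} non-complete components of $\rel G_2$ and $\rel H_2$ at once) and the final $\rel K_\lambda\times\rel Q_\lambda$ product to normalize the component structure on both sides before an isomorphism can be asserted. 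The extra machinery in the paper is precisely the price of not having your step~1 available in the clean global form you want.
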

\begin{proof} We are going to prove that the join of two non-permutable varieties $\mathcal{K}$ and $\mathcal{L}$ is non-permutable. As $\mathcal{K}$ and $\mathcal{L}$ are not congruence permutable, by Theorem \ref{Maltsev}, there exist non-symmetric reflexive digraphs $\rel G_0$ and $\rel H_0$ that are compatible digraphs in the varieties $\mathcal{K}$ and $\mathcal{L}$, respectively. Through a series of definitions, starting from $\rel G_0$ and $\rel H_0$, we define some compatible digraphs $\rel G_i$ in $\mathcal K$ and some compatible digraphs $\rel H_i$ in $\mathcal L$. The compatibility of all of these digraphs will be automatic, since their vertex sets and edge relations will be defined from compatible digraphs by existentially quantified (maybe infinite) conjunctions of atomic formulas in the language of digraphs extended by the equality.

We define the digraph $\rel G_1$ on the vertex set  \[
G_1:=\{\ (a,b,c,d)\in G_0^4:\,a\rightarrow b,c,d\ \text{ and }\ b\rightarrow c,d\ \text{ and }\ c\rightarrow d\text{ in }\rel G_0\ \}
\] with the edge relation \[
E(\rel G_1):=\{\ ((a,b,c,d),(a,b',c',d))\in G_1^2:\,b\rightarrow c'\ \text{ and } b'\rightarrow c\text{ in }\rel G_0\ \}.
\] 
Observe that each of the components of $\rel G_1$  has a universal vertex. Indeed, in each component, for some edge $(a,b)$ of $\rel G_0$, $(a,a,b,b)$ is a universal vertex. If $(a,b)$ is an edge but $(b,a)$ is not an edge of $\rel G_0$, then $((a,b,b,b),(a,a,a,b))$ is not an edge in the component of $(a,a,b,b)$ in $\rel G_1$. So at least one of the components of $\rel G_1$ is a non-complete digraph.

Let $\rel R$ be a non-complete component of $\rel G_1$. We define the digraph $\rel G_2$ on the vertex set \[
G_2:=\{\ f\in G_1^{\{0\}\dot\cup R}:\, f(0)\leftrightarrow f(x) \text{ in }\rel G_1 \text{ for all } x\in R \ \}
\] with the edge relation \[
E(\rel G_2):=\{\ (f,f')\in G_2^2:\, f(0)\rightarrow f'(0) \text{ in }\rel G_1  \text{ and } f(x)=f'(x) \text{ for all } x\in R\ \}.
\]

Let $f$ be any vertex of $\rel G_2$ and $f_u$ the vertex obtained from $f$ by changing the value of $f$ at $0$ to the universal vertex $u$ of the component of $f(0)$ in $\rel G_1$. Then $f_u$ is a universal vertex of the component of $f$ in $\rel G_2$.

Moreover, the vertex set
$$\{\ f\in G_2:\ f(x)=u_{\rel R} \text{ for all } x\in R\ \}$$
where $u_{\rel R}$ is a universal vertex of $\rel R$ induces a component isomorphic to $\rel R$ in $\rel G_2$. Thus, $\rel G_2$ contains a non-complete component.
The digraph $\rel G_2$  also contains a complete component. Indeed, the vertex set
$$\{\ f\in G_2:\ f(x)=x \text{ for all } x\in R\ \}$$
gives a component isomorphic to the clique of  the universal vertices of $\rel R$ in $\rel G_2$ .

We define $\rel H_1$ and $\rel H_2$ from $\rel H_0$ by the pattern as $\rel G_1$ and $\rel G_2$ are defined from $\rel G_0$. Thus all components of $\rel H_2$ and $\rel G_2$ contain universal vertices, and there are both complete and non-complete components in each of these digraphs.

By Corollary \ref{many_isomorphic_powers}, there exist a digraph $\rel X$ and a non-complete digraph $\rel T$ with a universal vertex such that for every non-complete component $\rel C$ of the digraphs $\rel G_2$ and $\rel H_2$,  $\rel C^{\rel X}$ is isomorphic to $\rel T$. We define the digraph $\rel G_3$ on the vertex set \[
G_3:=\{\ f\in G_2^X: \text{ there exists } u\in G_2 \text{ such that for all }x\in X,\ f(x)\to u\text{ in } \rel G_2\ \}
\] with the edge relation \[
E(\rel G_3):=\{\ (f,f')\in G_3^2:\,f(x)\rightarrow f'(y)\text{ in } \rel G_2 \text{ for all } x\to y \text{ in }\rel X\ \}.
\] The digraph $\rel G_3$ is the subgraph of $\rel G_2^{\rel X}$ induced by the vertices whose ranges lie in a single component of $\rel G_2$. Thus the components of $\rel G_3$ are the  $\rel X$-th powers of the components of $\rel G_2$. Hence, $\rel G_3$ has only two kinds of components, the ones isomorphic to $\rel T$ and the complete ones. We  define $\rel H_3$ from $\rel H_2$ similarly. The digraph $\rel H_3$ also has only two kinds of components, some isomorphic to $\rel T$ and some complete components.

Now there are some powers of $\rel G_3$ and $\rel H_3$ of the same infinite cardinality, say $\lambda$. Since $\rel G_3$ and $\rel H_3$ are compatible in $\mathcal K$ and $\mathcal L$, respectively, so are these powers. Therefore, the complete digraph $\rel K_{\lambda}$ of cadinality $\lambda$ and the $\lambda$-many element digraph $\rel Q_{\lambda}$  whose edge relation is the equality are compatible digraphs in both varieties  $\mathcal K$ and $\mathcal L$. Hence the digraph $\rel G_3\times \rel K_{\lambda}\times \rel Q_{\lambda}$  is compatible in $\mathcal K$, and the digraph $\rel H_3\times \rel K_{\lambda}\times \rel Q_{\lambda}$ is compatible in $\mathcal L$. Clearly, $\rel K_{\lambda}\times \rel Q_{\lambda}$ is a digraph with $\lambda$-many components where each component is isomorphic to $K_\lambda$. Hence, both of the digraphs $\rel G_3\times \rel K_{\lambda}\times \rel Q_{\lambda}$ and $\rel H_3\times \rel K_{\lambda}\times \rel Q_{\lambda}$ consist of $\lambda$-many complete components of size $\lambda$ and $\lambda$-many non-complete components isomorphic to $\rel T\times \rel K_{\lambda}$. Therefore, the digraphs $\rel G_3\times \rel K_{\lambda}\times \rel Q_{\lambda}$ and $\rel H_3\times \rel K_{\lambda}\times \rel Q_{\lambda}$ are isomorphic and compatible in each of the varieties $\mathcal{K}$, $\mathcal{L}$ and  $\mathcal{K}\vee\mathcal{L}$. Since $\rel G_3\times \rel K_{\lambda}\times \rel Q_{\lambda}$ has a non-complete component that contains a universal vertex, there exist vertices $u,v, w$ in it such that $v\rightarrow u\rightarrow u\rightarrow w$ and $v\not\rightarrow w$. Then, the third condition in Theorem \ref{Maltsev} does not hold for $\mathcal{K}\vee\mathcal{L}$, for otherwise there would be a ternary term $m$ in the language of $\mathcal{K}\vee\mathcal{L}$ such that $v= m(v,u,u)\to m(u,u,w)=w$, a contradiction. Hence, $\mathcal{K}\vee\mathcal{L}$ is not congruence permutable.
\end{proof}

\end{document}